\numberwithin{equation}{section}
\theoremstyle{definition}
\newtheorem{thm}{Theorem}[section]
\newtheorem{prop}[thm]{Proposition}
\newtheorem{df}[thm]{Definition}
\newtheorem{lem}[thm]{Lemma}
\newtheorem{rem}[thm]{Remark}
\newtheorem{ex}[thm]{Example}
\newtheorem*{ack}{Acknowledgments}
\newtheorem*{mt}{Main Theorem}
\def\Aut{\mathop{\mathrm{Aut}}\nolimits}
\def\Hom{\mathop{\mathrm{Hom}}\nolimits}
\def\Km{\mathop{\mathrm{Km}}\nolimits}
\title[Galois points and Eisenstein $K3$ surfaces]{Quartic surfaces with a Galois point and Eisenstein $K3$ surfaces}
\author[K.~Miura]{Kei Miura}
\address{Department of Mathematics, National Institute of Technology, Ube College, 
2-14-1 Tokiwadai, Ube, Yamaguchi 755- 8555, Japan}
\email{kmiura@ube-k.ac.jp}
\author[S.~Taki]{Shingo Taki}
\address{Department of Mathematics, Tokai University,
4-1-1, Kitakaname, Hiratsuka, Kanagawa, 259-1292, Japan}
\email{taki@tsc.u-tokai.ac.jp}
\urladdr{http://www2.sm.u-tokai.ac.jp/taki/}
\date{\today}
\subjclass[2020]{Primary 14J70; Secondary 14J28, 14J50, 14N05}
\keywords{Galois point, automorphism, $K3$ surface}
\dedicatory{}
\thanks{}
\begin{document}

\begin{abstract}
We prove that there exists a one to one correspondence between
smooth quartic surfaces with an inner Galois point
and  Eisenstein $K3$ surfaces of type $(4, 3)$.
Furthermore we characterize 
the quartic surface with 8 (the maximum number) inner Galois points 
as a singular $K3$ surface.
\end{abstract}

\maketitle


\section{Introduction}\label{Introduction}
In algebraic geometry, hypersurfaces of a projective space 
are one of the most fundamental and important objects.
Yoshihara introduced a remarkable new notion, that is, a Galois point 
in 1996.
The paper in which a Galois point was first introduced is \cite{MY}. 
In that paper, they constructed a field theory for
function fields of smooth plane quartic curves
and introduced Galois points as a geometric viewpoint for field extensions.
After that, Galois points have been studied by several mathematicians,
and generalized to various types, 
e.g., \cite{Yoshihara1, M, Yoshihara2, Homma, Fukasawa} and so on.

Yoshihara completely determined quartic surfaces which contain Galois points \cite{Yoshihara-GK3}.
In particular he proved that the maximum number of Galois points on quartic surfaces is 8.
Such a Galois point naturally induces an automorphism on a quartic $K3$ surface.
It is known that there exist many results about  automorphisms of $K3$ surfaces, 
e.g., \cite{Ni, AS, Taki1} and so on.
This paper is devoted to a study of a correspondence 
between quartic surfaces which contain a Galois point and 
$K3$ surfaces with an automorphism.
We will work over $\mathbb{C}$, the field of complex numbers, throughout this paper.

Let $V$ be a smooth hypersurface in $\mathbb{P}^{N}$
and $\mathbb{C}(V)$ the function field of $V$.
For a point $P \in V$, we consider a 
projection $\pi_{P}: V \dashrightarrow H$ where 
$H$ is a hyperplane in $\mathbb{P}^{N}$ which does not contain $P$.
Note that the projection $\pi_{P}$ induces the extension 
$\mathbb{C}(V)/\pi_{P}^{\ast}\mathbb{C}(H)$ of function fields.
The point $P$ is called an \textit{inner Galois point} for $V$ if 
the extension $\mathbb{C}(V)/\pi_{P}^{\ast}\mathbb{C}(H)$ is Galois. 
Moreover we denote by $G_{P}$ its Galois group.

Let $S$ be a $K3$ surface with an automorphism group 
$G \subset \Aut (S)$ of order 3
which acts on $H^{0}(S, \Omega_{S}^{2})$ faithfully, 
and $L:=H^{2}(S, \mathbb{Z})^{G}$ the $G$-invariant sublattice of $H^{2}(S, \mathbb{Z})$. 
Then $L$ is a 3-elementary lattice, namely 
$\Hom (L, \mathbb{Z})/L \simeq (\mathbb{Z}/3\mathbb{Z})^{a}$.
If the rank of $L$ is $r$ then a pair $(S, G)$ is called an 
\textit{Eisenstein $K3$ surface} of type $(r, a)$.
Generators of $G$ are classified by \cite{AS} and \cite{Taki1},
and several applications (e.g., \cite{OT} and \cite{MOT}) are known.

The following is the main theorem of this paper.

\begin{mt}
\begin{enumerate}
\item\label{1} Let $S$ be a smooth quartic surface with an inner Galois point $P$
and $G_{P}$ the Galois group associated with $P$.
Then a pair $(S, G_{P})$ is an Eisenstein $K3$ surface of type $(4, 3)$.
In particular  there exists a one to one correspondence between
smooth quartic surfaces with an inner Galois point
and  Eisenstein $K3$ surfaces of type $(4, 3)$.

\item\label{2} Let $S_{8}$ be the quartic surface with the maximal number of inner Galois points.
Then $S_{8}$ is the singular $K3$ surface whose the transcendental lattice is 
given by $\begin{pmatrix}8 & 4\\ 4 & 8\end{pmatrix}$.
\end{enumerate}
\end{mt}
\begin{rem}
Main Theorem (\ref{2}) solves an open problem for Galois points.
See also \cite[\fbox{I}-(B)-(1)-(b)]{open-problems}.
\end{rem}

We summarize the contents of this paper. 
In Section \ref{Eisenstein_K3}, we recall some basic results about 
Eisenstein $K3$ surfaces. 
In Section \ref{QSwGisEK}, we discuss Main Theorem (\ref{1}).
The proof is given by Theorem \ref{mt1} and Proposition \ref{1to1}.
In Section \ref{s8hax3}, we treat Main Theorem (\ref{2}).
The proof is given by Theorem \ref{mt2}. 

\begin{ack}
We would like to thank Professor Kazuki Utsumi for several useful comments.
Special thanks go to Professor Masato Kuwata for pointing out a mistake in an earlier version.
This work was partially supported by 
Grant-in-Aid for Scientific Research (C) 18K03230, 19K03454 and 23K03036 from JSPS.
\end{ack}

\section{Eisenstein $K3$ surfaces}\label{Eisenstein_K3}
In this section, we collect some basic results for Eisenstein $K3$ surfaces. 
For the details about foundations $K3$ surfaces, see \cite{Huybrechts, Kondo-book}, and so on.

Let $S$ be a compact complex surface. 
If its canonical line bundle $K_{S}$ is trivial and $H^{1}(S, \mathcal{O}_{S})=0$ then 
$S$ is called a \textit{$K3$ surface}.
We consider the cup product on $H^{2}(S,\mathbb{Z})$:
\[ \langle \ , \ \rangle :H^{2}(S,\mathbb{Z})\times H^{2}(S,\mathbb{Z})\to \mathbb{Z}.\]
Then the pair $(H^{2}(S,\mathbb{Z}), \langle \ , \ \rangle )$
has a structure of a lattice.
Here a lattice  means a free abelian group $L$ of finite rank equipped with 
a non-degenerate symmetric bilinear form, which will be denote by $\langle \ , \ \rangle $.
The bilinear form $\langle \ , \ \rangle $ determines a 
canonical embedding $L\subset L^{\ast }=\Hom (L,\mathbb{Z})$. 

Let $G$ be an automorphism group $G \subset \Aut (S)$ which 
acts on $H^{0}(S, \Omega_{S}^{2})$ faithfully, and 
$L:=\{ x\in H^{2}(S, \mathbb{Z})\mid g^{\ast}(x)=x, \forall g \in G \}$ 
the $G$-invariant sublattice of $H^{2}(S, \mathbb{Z})$. 
It is known that if the order of $G$ is prime $p$ then
$L^{\ast}/L $ is isomorphic to a $p$-elementary abelian group.

\begin{df}(\cite[\S 3]{MOT})\label{df-Eisenstein}
Let $S$ be a $K3$ surface with an automorphism group 
$G$ of order 3 which acts on $H^{0}(S, \Omega_{S}^{2})$ faithfully.
If $G$-invariant sublattice $L$ is of rank $r$ and satisfies
$L^{\ast}/L \simeq (\mathbb{Z}/3\mathbb{Z})^{a}$
then a pair $(S, G)$ is called an \textit{Eisenstein $K3$ surface} of type $(r, a)$.
\end{df}

\begin{prop}(\cite{AS}, \cite{Taki1})\label{order3-basic}
The followings hold.
\begin{enumerate}
\item Let $(S, G)$ be an Eisenstein $K3$ surface.
The fixed locus $S^{G}$ is of the form
\[ S^{G}=C^{(g)} \amalg E_{1} \amalg \dots \amalg E_{k} \amalg\{P_{1},\dots , P_{n}\}, \]
where $C^{(g)}$ is a genus $g$ curve, $E_{i}$ are non-singular rational curves 
and $P_{j}$ are isolated points with
\[ g=\frac{22-r-2a}{4}, \hspace{5mm} k=\frac{2+r-2a}{4}, \hspace{5mm} n=\frac{r}{2}-1.\]

\item The deformation type of an Eisenstein $K3$ surface $(S, G)$ is determined by the invariant $(r, a)$.
\end{enumerate}
\end{prop}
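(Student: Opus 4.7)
The proof of (1) combines a local analysis of fixed points with global fixed-point formulas. Let $\sigma$ generate $G$ and let $\zeta$ be a primitive cube root of unity. At any $p\in S^{G}$ one may diagonalise $d\sigma_p$, so $\sigma$ acts in local analytic coordinates as $(x,y)\mapsto(\zeta^{a}x,\zeta^{b}y)$. Since $G$ acts faithfully on $H^{0}(S,\Omega_S^2)$, the determinant $\zeta^{a+b}$ is a nontrivial cube root of unity, which forces $a+b\not\equiv 0\pmod 3$. Up to replacing $\sigma$ by $\sigma^{2}$, the only possibilities are $(a,b)=(1,0)$, giving a smooth fixed curve through $p$, and $(a,b)=(1,1)$, giving an isolated fixed point. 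This already forces $S^{G}$ to decompose as a disjoint union of smooth curves and isolated points.

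To extract the numerical invariants $g$, $k$, $n$, I would apply three global identities. First, the topological Lefschetz formula
\[
\chi(S^{G})=2+\mathrm{tr}\bigl(\sigma^{\ast}\mid H^{2}(S,\mathbb{C})\bigr),
\]
combined with the observation that $\sigma$ acts trivially on $L\otimes\mathbb{C}$ and by $\zeta,\zeta^{2}$ with equal multiplicities $(22-r)/2$ on $L^{\perp}\otimes\mathbb{C}$, gives one linear relation in $(g,k,n,r)$. Second, the holomorphic Lefschetz formula applied to $\mathcal{O}_S$ equates $1+\zeta^{-1}$ with a sum of local contributions $1/(1-\zeta)^{2}$ from the isolated fixed points and $(1-g(C_j))/(1-\zeta)$ from the fixed curves; separating real and imaginary parts yields two more equations. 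Third, Nikulin's formula for the discriminant of the fixed lattice of a non-symplectic automorphism of odd prime order relates $a$ to the structure of the fixed locus and in particular pins down $n=r/2-1$. Solving the resulting linear system in $(g,k,n)$ reproduces the stated expressions. The bookkeeping of the holomorphic contributions of the fixed curves, and the reconciliation with the discriminant-group invariant $a$, will be the main technical step; once the system is assembled the rest is routine.

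For (2), I would argue via the global Torelli theorem and the period map. A theorem of Nikulin classifying even $p$-elementary lattices for $p$ odd shows that an indefinite hyperbolic $3$-elementary lattice of signature $(1,r-1)$ is determined up to isometry by the pair $(r,a)$ alone (there is no extra $\delta$-invariant at an odd prime), so the $G$-invariant lattice $L$ is unique up to isometry for fixed $(r,a)$. Using the eigenspace decomposition $L^{\perp}\otimes\mathbb{C}=V_{\zeta}\oplus V_{\zeta^{2}}$, the period point lies in an open subset of a complex ball inside $V_{\zeta}$; by surjectivity of the period map for $K3$ surfaces together with connectedness of the corresponding arithmetic ball quotient, all Eisenstein $K3$ surfaces of type $(r,a)$ fill a single irreducible family and are therefore deformation equivalent. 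The hard step here is the lattice uniqueness, which is quoted from Nikulin's classification of discriminant forms of $p$-elementary lattices.
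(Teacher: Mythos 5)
This proposition is quoted by the paper from Artebani--Sarti \cite{AS} and Taki \cite{Taki1}; the paper itself contains no proof, so the only meaningful comparison is with those references, and your outline is essentially their argument: linearisation of the action at fixed points, the topological and holomorphic Lefschetz fixed point formulas, and lattice-theoretic input for part (1); Nikulin's classification of $3$-elementary even lattices, the ball-type period domain attached to the $\zeta$-eigenspace, and surjectivity of the period map plus connectedness of the resulting family for part (2). Two bookkeeping remarks: as literally written, your curve contribution to the holomorphic Lefschetz formula omits the normal-bundle/self-intersection term (for a fixed curve $C$ on a $K3$ the correct contribution is proportional to $(1-g(C))(1+\bar\zeta)/(1-\bar\zeta)^{2}$, not $(1-g(C))/(1-\zeta)$), though you flag this as the technical step; and the relation $n=r/2-1$ in fact already follows from the two Lefschetz formulas alone, whereas the lattice-theoretic (Smith-theory/discriminant) input is what produces the $a$-dependence separating $g$ and $k$, so your attribution of which equation pins down $n$ should be adjusted. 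With these minor corrections the plan closes and agrees with the cited proofs.
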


\begin{rem}\label{symplectic_order3}
Let $S$ be a $K3$ surface with an automorphism group $G$ of order 3.
Assume that $G$ does not act on $H^{0}(S, \Omega_{S}^{2})=\langle \omega_{S} \rangle$ faithfully,
hence there exists $g\neq 1 \in G$ such that $g^{\ast}\omega_{S}=\omega_{S}$.
Then the fixed locus $S^{G}$ consists of exactly 6 isolated points,
hence $S^{G}$ does not contain a curve. See also \cite[\S 5]{Ni}.
\end{rem}

\begin{ex}\label{ex-4-3}
Let $S$ be a smooth quartic surface given by the homogeneous equation:
$F_{4}(X, Y, Z)+F_{1}(X, Y, Z)W^{3}=0$ where 
$F_{d}(X, Y, Z)$ is a homogeneous polynomial of degree $d$.
Then $S$ has an automorphism 
$\sigma: [X : Y : Z : W] \mapsto [X : Y : Z : \zeta_{3}W]$
where $\zeta_{n}$ is a primitive $n$-th root of unity.

Let $G$ be an automorphism group generated by $\sigma$.
Then we have
\begin{align*}
S^{G}&=S \cap (\{W=0\}\amalg \{X=Y=Z=0\})\\
&=\{F_{4}(X, Y, Z)=0\}\amalg \{[0:0:0:1]\}\\
&=C^{(3)}\amalg \{[0:0:0:1]\}.
\end{align*}

It is easy to see that $(r, a)=(4, 3)$ by Proposition \ref{order3-basic}, 
hence a pair $(S, G)$ is Eisenstein $K3$ surface of type $(4, 3)$.
\end{ex}

\begin{rem}\label{rem_AS49}
Let $(S, G)$ be an Eisenstein $K3$ surface of type $(4, 3)$.
Then $S$ is isomorphic to a smooth quartic surface 
with equations of the form $F_{4}(X, Y, Z)+F_{1}(X, Y, Z)W^{3}=0$
and $G$ is generated by the automorphism 
$\sigma : [X : Y : Z : W] \mapsto [X : Y : Z : \zeta_{3}W]$.
See also \cite[Proposition 4.9]{AS}.
\end{rem}

\section{Quartic surfaces with inner Galois points}\label{QSwGisEK}
In this section, we treat quartic surfaces with an inner Galois point.
For the details, see \cite{KTY, Yoshihara-GK3}, and so on.

Again, we give the definition of Galois points.

\begin{df}
Let $V$ be a smooth hypersurface in $\mathbb{P}^{N}$
and $\mathbb{C}(V)$ the function field of $V$.
For a point $P \in \mathbb{P}^{N}$, we consider a 
projection $\pi_{P}: V \dashrightarrow H$ where 
$H$ is a hyperplane in $\mathbb{P}^{N}$ which does not contain $P$.
The point $P \in V$ (respectively $P\not \in V$) is called 
an \textit{inner (resp. outer) Galois point} for $V$ if 
the extension $\mathbb{C}(V)/\pi_{P}^{\ast}\mathbb{C}(H)$ of function fields,
induced from $\pi_{P}$, is Galois. 
Moreover we denote by $G_{P}$ the Galois group of $\mathbb{C}(V)/\pi_{P}^{\ast}\mathbb{C}(H)$.
\end{df}

\begin{rem}
The structure of the extension $\mathbb{C}(V)/\pi_{P}^{\ast}\mathbb{C}(H)$ 
does not depend on choice of $H$ but only on $P$.
\end{rem}

\begin{prop}\label{equation_place}(\cite{Yoshihara-GK3}) 
Let $S$ be a smooth quartic surface with an inner Galois point.
Then $S$ is given by one of the followings (up to projective transformations): 
\begin{enumerate}
\item $F_{4}(X, Y, Z)+F_{1}(X, Y, Z)W^{3}=0$,
\item $F_{4}(X, Z)+F_{1}(X, Z)(Y^{3}+W^{3})=0$,
\item $F_{4}(X, Y)+Z^{4}+ZW^{3}=0$,
\item $X^{4}+Z^{4}+XY^{3}+ZW^{3}=0$,
\end{enumerate}
where $F_{d}$ is a homogeneous polynomial of degree $d$ with distinct factors.
Moreover the Galois points on each $S$ are
\begin{enumerate}
\item $[0:0:0:1]$,
\item $[0:1:0:0], [0:0:0:1]$,
\item $[0:0:0:1], [0:0: \zeta_{6}^{i}:1] \ \ (i=1, 3, 5)$,
\item $[0:0:0:1], [0:0: \zeta_{6}^{i}:1], [0:1:0:0], [\zeta_{6}^{i}:1:0:0 ] \ \ (i=1, 3, 5)$,
\end{enumerate}
respectively.
\end{prop}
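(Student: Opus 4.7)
The strategy is to use the order-$3$ Galois automorphism at $P$ to normalize the defining equation of $S$, and then to iterate the analysis to produce the increasingly restrictive forms (2)--(4) when additional Galois points are present.

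A line through an inner Galois point $P \in S$ meets the quartic at $P$ plus three residual points, so $\pi_P$ has degree $3$ and $G_P \simeq \mathbb{Z}/3\mathbb{Z}$. A generator $\sigma$ of $G_P$ fixes $P$ and sends every line through $P$ to itself; by a standard argument using the blow-up $\mathrm{Bl}_P \mathbb{P}^3 \to \mathbb{P}^2$ (on which $\sigma$ acts as a $\mathbb{P}^1$-bundle automorphism trivial on the base), $\sigma$ extends to a linear automorphism of $\mathbb{P}^3$. After placing $P = [0:0:0:1]$ and diagonalizing we may assume
\[
\sigma : [X:Y:Z:W] \mapsto [X:Y:Z:\zeta_3 W].
\]
The defining quartic $F$ is then a $\sigma$-semi-invariant. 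Since $P \in S$ excludes the $W^4$ term, and irreducibility of $S$ rules out the pure-weight forms $W \cdot F_3$ and $W^2 \cdot F_2$, we must have $F = F_4(X,Y,Z) + F_1(X,Y,Z) W^3$, with $F_4$ and $F_1$ having distinct factors by the smoothness of $S$ along $\{W=0\}$. This is case (1), with $[0:0:0:1]$ as the displayed inner Galois point.

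To obtain (2)--(4), one looks for additional inner Galois points $Q$ on a surface already of the form (1). Any such $Q$ supplies a second order-$3$ automorphism $\tau$ which must also preserve $F$; this compatibility forces $F_4$ and $F_1$ to be jointly invariant under a subgroup of $\mathrm{PGL}_3(\mathbb{C})$ determined by the relative position of $P$ and $Q$. A finite case analysis, distinguishing whether $Q$ lies on the $\sigma$-fixed plane $\{W=0\}$, on the line swept out by the non-identity axis at $P$, or elsewhere, reduces to the three successive specializations (2), (3), (4). In the maximal case (4), the symmetric form $X^4 + Z^4 + XY^3 + ZW^3$ carries eight manifest Galois points arising from its enhanced symmetry group. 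In each case, the listed Galois points are verified directly: one projects from the claimed point and checks that the residual cubic in the line parameter is a $\zeta_3$-cyclic extension of the function field of the target plane.

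The main obstacle is this exhaustive case analysis -- systematically matching each configuration of two or more order-$3$ projective symmetries stabilizing a smooth quartic to a specific shape for $F_4$ and $F_1$, and ruling out any further normal form beyond (1)--(4). The subsidiary technical point, extending $\sigma$ from a birational self-map of $S$ to a linear automorphism of $\mathbb{P}^3$, is handled via the blow-up construction indicated above.
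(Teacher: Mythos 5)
First, note that the paper does not prove this proposition at all: it is quoted verbatim from Yoshihara's paper \cite{Yoshihara-GK3}, so there is no internal proof to compare against; any genuine proof must essentially reproduce Yoshihara's classification. Your sketch handles the first part correctly: the projection from an inner point of a smooth quartic has degree $3$, so $G_P\simeq\mathbb{Z}/3\mathbb{Z}$; the generator preserves every line through $P$, hence preserves hyperplane sections through $P$, hence $\sigma^{*}\mathcal{O}_S(1)\simeq\mathcal{O}_S(1)$, and since the quartic is embedded by the complete linear system, $\sigma$ is induced by a projective transformation (this linear-system argument is the standard one; your blow-up phrasing is looser, since $\sigma$ is a priori only defined on $S$, not on $\mathrm{Bl}_P\mathbb{P}^3$, but the idea can be repaired). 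Diagonalizing to $\mathrm{diag}(1,1,1,\zeta_3)$ and using semi-invariance plus irreducibility to force $F=F_4(X,Y,Z)+F_1(X,Y,Z)W^3$ is exactly right and yields form (1).

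The genuine gap is everything after that. The content of the proposition is (i) that a smooth quartic carrying two or more inner Galois points must, after a projective change of coordinates, take one of the increasingly special forms (2), (3), (4), with no other possibilities, and (ii) that the lists of Galois points given for each normal form are exhaustive, not merely that the listed points are Galois. Your proposal reduces both of these to ``a finite case analysis'' organized by the position of a second Galois point $Q$ relative to the fixed locus of $\sigma$, and you yourself label this analysis the main obstacle --- but you never carry it out. In particular you do not show why two Galois points force the shape $F_4(X,Z)+F_1(X,Z)(Y^3+W^3)$, why a third Galois point forces the collinear configuration of (3) with exactly four points, why the maximal case has exactly eight points and the rigid equation $X^4+Z^4+XY^3+ZW^3=0$, nor why no configuration other than $1,2,4,8$ points can occur. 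The easy direction (verifying that each listed point of each normal form is Galois, via the visible order-$3$ linear automorphism fixing it fiberwise over the projection) is fine, but the exhaustiveness statements are precisely Yoshihara's theorem and remain unproved in your write-up. As it stands the proposal is a correct derivation of case (1) together with an outline of what would need to be done for the rest, not a proof of the proposition.
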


\begin{lem}\label{generator}(\cite[Lemma 2]{KTY})
Put 
$P_{1}:=[0:0:0:1], 
P_{2}:=[0:0: \zeta_{6}:1], 
P_{3}:=[0:0: \zeta_{6}^{3}:1], 
P_{4}:=[0:0: \zeta_{6}^{5}:1],
P_{5}=[0:1:0:0], 
P_{6}:=[\zeta_{6}:1:0:0 ], 
P_{7}:=[\zeta_{6}^{3}:1:0:0 ]$ and
$P_{8}:=[\zeta_{6}^{5}:1:0:0 ]$.
Let $\sigma_{i}$ be a generator of $G_{P_{i}} \ \ (i=1, 2, \dots , 8)$.
Then $\sigma_{i}$ is given by one of the followings:
\begin{align*}
\sigma_{1} &: [X:Y:Z:W] \mapsto [X:Y:Z:\zeta_{3}W], \\
\sigma_{2} &: [X:Y:Z:W] \mapsto \left[ X:Y:
\frac{2\zeta_{6}-1}{3}Z+\frac{-\zeta_{6}-1}{3}W : \frac{4\zeta_{6}-2}{3}Z+\frac{\zeta_{6}+1}{3}W \right], \\
\sigma_{3} &: [X:Y:Z:W] \mapsto \left[ X:Y:
\frac{2\zeta_{6}-1}{3}Z+\frac{-\zeta_{6}+2}{3}W : \frac{-2\zeta_{6}+4}{3}Z+\frac{\zeta_{6}+1}{3}W \right], \\
\sigma_{4} &: [X:Y:Z:W] \mapsto \left[ X:Y:
\frac{2\zeta_{6}-1}{3}Z+\frac{2\zeta_{6}-1}{3}W : \frac{-2\zeta_{6}-2}{3}Z+\frac{\zeta_{6}+1}{3}W \right], \\
\sigma_{5} &: [X:Y:Z:W] \mapsto [X:\zeta_{3}Y:Z:W], \\
\sigma_{6} &: [X:Y:Z:W] \mapsto 
\left[ \frac{2\zeta_{6}-1}{3}X+\frac{-\zeta_{6}-1}{3}Y : \frac{4\zeta_{6}-2}{3}X+\frac{\zeta_{6}+1}{3}Y :Z:W \right], \\
\sigma_{7} &: [X:Y:Z:W] \mapsto 
\left[ \frac{2\zeta_{6}-1}{3}X+\frac{-\zeta_{6}+2}{3}Y : \frac{-2\zeta_{6}+4}{3}X+\frac{\zeta_{6}+1}{3}Y :Z:W \right], \\
\sigma_{8} &: [X:Y:Z:W] \mapsto 
\left[ \frac{2\zeta_{6}-1}{3}X+\frac{2\zeta_{6}-1}{3}Y : \frac{-2\zeta_{6}-2}{3}X+\frac{\zeta_{6}+1}{3}Y :Z:W \right].\\
\end{align*}
In particular, each $\sigma_{i}$ is of order 3.
\end{lem}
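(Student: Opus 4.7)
The plan is to verify, for each $i=1,\ldots,8$, that the stated $\sigma_i$ is a projective linear automorphism of $\mathbb{P}^3$ preserving $S=\{X^4+Z^4+XY^3+ZW^3=0\}$, fixing $P_i$, of order $3$, and generating the Galois group $G_{P_i}$. Since $|G_{P_i}|=3$ (a generic line through $P_i\in S$ meets $S$ in $P_i$ and three further points, so $\deg\pi_{P_i}=3$), exhibiting an order-$3$ projective automorphism of $S$ fixing $P_i$ and lying over the identity on a hyperplane $H$ not containing $P_i$ automatically generates $G_{P_i}$.

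Two symmetries of $F=X^4+Z^4+XY^3+ZW^3$ drastically reduce the work. First, the involution $\tau:[X:Y:Z:W]\mapsto[Z:W:X:Y]$ preserves $F$ and carries $P_i$ to $P_{i+4}$ for $i=1,2,3,4$, so setting $\sigma_{i+4}:=\tau\sigma_i\tau^{-1}$ produces the second quartet from the first. Second, $\sigma_1$ cyclically permutes $P_2,P_3,P_4$ (the direct computation $[0:0:\zeta_6^k:1]\mapsto[0:0:\zeta_6^{k-2}:1]$), so conjugating $\sigma_2$ by appropriate powers of $\sigma_1$ yields $\sigma_3$ and $\sigma_4$. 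Hence the only genuinely new cases are $\sigma_1$ and $\sigma_2$.

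For $\sigma_1$, the variable $W$ enters $F$ only through $W^3$, so $W\mapsto\zeta_3 W$ manifestly preserves $F$ and fixes $P_1=[0:0:0:1]$. For $\sigma_2$, any generator of $G_{P_2}$ must fix the center $P_2$ and preserve every line through $P_2$ setwise (those lines being the fibres of $\pi_{P_2}$), so $\sigma_2$ must be a projective homology: the identity on the $(X,Y)$-coordinates, acting as some $M\in\mathrm{GL}_2$ on $(Z,W)$ whose image in $\mathrm{PGL}_2$ has order $3$ and admits $(\zeta_6,1)^{\top}$ as an eigenvector. The main obstacle is to determine $M$: substituting $[X:Y:Z:W]\mapsto[X:Y:aZ+bW:cZ+dW]$ into $F$ and imposing $F\circ\sigma_2=F$ gives polynomial identities in $a,b,c,d$ over $\mathbb{Z}[\zeta_6]$; together with the eigenvector condition and $\mathrm{tr}(M)^2/\det(M)=1$ (the order-$3$ condition in $\mathrm{PGL}_2$), this overdetermined system has, up to inverse, a unique solution, yielding the entries $\tfrac{2\zeta_6-1}{3}$ etc.\ after systematic use of $\zeta_6^2=\zeta_6-1$. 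Once $\sigma_2$ is in hand, verifying $\sigma_2^3=\mathrm{id}$ in $\mathrm{PGL}_4$ and $\sigma_2^{\ast}F=F$ is a routine computation, and the remaining generators are obtained from the two symmetries above.
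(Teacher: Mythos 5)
The paper offers no proof of this lemma at all --- it is imported verbatim as \cite[Lemma 2]{KTY} --- so there is no internal argument to compare with; your reconstruction is a genuine proof sketch and its architecture is sound. The degree count $\deg\pi_{P_i}=3$ correctly reduces everything to exhibiting one order-$3$ automorphism of $S$ compatible with the projection; the involution $\tau:[X:Y:Z:W]\mapsto[Z:W:X:Y]$ does preserve $X^4+Z^4+XY^3+ZW^3$ and carries $P_i$ to $P_{i+4}$; and conjugation by $\sigma_1$ (which sends $[0:0:\zeta_6^k:1]$ to $[0:0:\zeta_6^{k-2}:1]$) in fact reproduces the displayed matrices of $\sigma_3$ and $\sigma_4$ exactly, since conjugating the $(Z,W)$-block of $\sigma_2$ by $\mathrm{diag}(1,\zeta_3^{\pm1})$ multiplies the off-diagonal entries by $\zeta_3^{\mp1}$ and $\zeta_3^{\pm1}$, which gives precisely the listed coefficients after using $\zeta_6^2=\zeta_6-1$. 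So the reduction to the two cases $\sigma_1,\sigma_2$ is legitimate and even recovers the exact formulas.

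One step should be made explicit, because your final checklist for $\sigma_2$ (namely $\sigma_2^3=\mathrm{id}$ in $\mathrm{PGL}_4$ and $\sigma_2^{\ast}F=F$) is not quite enough to conclude $\sigma_2\in G_{P_2}$: membership in the Galois group requires $\pi_{P_2}\circ\sigma_2=\pi_{P_2}$, i.e.\ that $\sigma_2$ preserves every line through $P_2$, and your ansatz as stated (identity on $(X,Y)$, a block $M$ on $(Z,W)$ with eigenvector $(\zeta_6,1)^{\top}$ and $\mathrm{tr}(M)^2=\det M$) does not force this --- a block with eigenvalues $\zeta_3,\zeta_3^2$ satisfies all of those conditions yet is not a homology with centre $P_2$; likewise, exact invariance of $F$ only determines the lift of $M$ up to a fourth root of unity, so ``unique up to inverse'' is slightly overstated until the order-$3$ condition in $\mathrm{PGL}_4$ is imposed. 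The missing check is that the linear map acts as a scalar on $\mathbb{C}^4/\langle(0,0,\zeta_6,1)\rangle$, equivalently that the second eigenvalue of $M$ equals the scalar $1$ used on the $(X,Y)$-block; for the displayed matrix this is the one-line computation $\frac{2\zeta_6-1}{3}-\zeta_6\cdot\frac{4\zeta_6-2}{3}=1$, so $M$ has eigenvalues $1$ and $\zeta_3$ and $\sigma_2$ is indeed a homology of order $3$ with centre $P_2$, hence generates $G_{P_2}$. With that line added (and its conjugates for the remaining points), your argument is complete.
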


\begin{thm}\label{mt1}
Let $S$ be a smooth quartic surface with an inner Galois point $P$.
Then a pair $(S, G_{P})$ is an Eisenstein $K3$ surface of type $(4,3)$.
\end{thm}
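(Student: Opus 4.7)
The plan is to reduce each of the inner-Galois-point cases listed in Proposition \ref{equation_place} and Lemma \ref{generator} to Example \ref{ex-4-3} via a projective change of coordinates that normalizes the generator of $G_{P}$. For each of the at most eight inner Galois points $P_{i}$, the generator $\sigma_{i}$ of $G_{P_{i}}$ is an explicit projective linear automorphism of order $3$. For $\sigma_{1}$ and $\sigma_{5}$ the lift to $\mathrm{GL}_{4}(\mathbb{C})$ is already diagonal with eigenvalues $(1,1,1,\zeta_{3})$. For each of the remaining six generators, $\sigma_{i}$ acts trivially on two of the four homogeneous coordinates and nontrivially on the other two via a $2\times 2$ matrix whose trace is $\zeta_{6}$ and whose determinant is $\zeta_{3}$; since $\zeta_{6}=1+\zeta_{3}$ and $\zeta_{6}^{2}=\zeta_{3}$, the eigenvalues of this block are $1$ and $\zeta_{3}$. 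Hence in every case a suitable linear change of coordinates transforms $\sigma_{i}$ into $[X:Y:Z:W]\mapsto[X:Y:Z:\zeta_{3}W]$ and $P_{i}$ into $[0:0:0:1]$.

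Once this normalization is in place, the defining quartic $F$ of $S$, being a semi-invariant for $\sigma_{i}$, is an eigenvector for the induced action on $H^{0}(\mathbb{P}^{3},\mathcal{O}(4))$. The three possible eigenvalues $1,\zeta_{3},\zeta_{3}^{2}$ correspond respectively to equations of the forms $F_{4}(X,Y,Z)+F_{1}(X,Y,Z)W^{3}=0$, $F_{3}(X,Y,Z)W+cW^{4}=0$, and $F_{2}(X,Y,Z)W^{2}=0$. The third is singular along $\{W=0\}$; the second, combined with the requirement $P_{i}=[0:0:0:1]\in S$, forces $c=0$ and hence reduces to $F_{3}W=0$, again singular. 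Therefore $F$ must be of the first form, so $(S,G_{P_{i}})$ matches the set-up of Example \ref{ex-4-3}, which via Proposition \ref{order3-basic} already yields the invariants $(r,a)=(4,3)$.

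Faithfulness of the $G_{P_{i}}$-action on $H^{0}(S,\Omega_{S}^{2})$ is automatic from Remark \ref{symplectic_order3}: in the normalized coordinates the fixed locus of $\sigma_{i}$ on $S$ contains the smooth plane quartic $\{F_{4}(X,Y,Z)=W=0\}$ of genus $3$, which is incompatible with the six-isolated-points pattern of a symplectic order-$3$ automorphism. The main obstacle is therefore only the bookkeeping needed to verify the eigenvalue data for the six non-diagonal $\sigma_{i}$'s of Lemma \ref{generator}; this reduces to a single trace/determinant computation per case, after which the surface-level analysis is identical to that of Example \ref{ex-4-3}.
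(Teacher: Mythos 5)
Your argument is correct, and it reorganizes the paper's proof rather than reproducing it. The paper works case by case on Yoshihara's normal forms from Proposition \ref{equation_place}: for each Galois point it takes the explicit generator from Lemma \ref{generator}, finds the eigenvalues of its matrix, and intersects the resulting fixed plane and fixed point with the given surface (this is carried out in detail for $P=[0:0:\zeta_{6}:1]$, producing the genus-$3$ curve $\{F_{4}(X,Y)-\tfrac{9\zeta_{6}}{16}W^{4}=0\}$ and the isolated point), then applies Proposition \ref{order3-basic}. You instead diagonalize the generator once and for all: your trace/determinant observation is right, since each nontrivial $2\times 2$ block has characteristic polynomial $\lambda^{2}-\zeta_{6}\lambda+\zeta_{3}$, whose roots are $1$ and $\zeta_{3}$, so every $\sigma_{i}$ is conjugate to $\mathrm{diag}(1,1,1,\zeta_{3})$; semi-invariance of the defining quartic then forces the equation into the form $F_{4}(X,Y,Z)+F_{1}(X,Y,Z)W^{3}=0$, and a single application of Example \ref{ex-4-3} and Remark \ref{symplectic_order3} covers all eight points uniformly, which is tidier than repeating the fixed-locus computation per case. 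One loose end: the eigenvalue data alone does not show that the coordinate change sends $P_{i}$ to $[0:0:0:1]$ — for that you must also check that $P_{i}$ spans the $\zeta_{3}$-eigenline rather than lying in the fixed plane — and you invoke this to discard the $\zeta_{3}$-eigenvalue case $F_{3}(X,Y,Z)W+cW^{4}$. That case can be eliminated without it, since $F_{3}W+cW^{4}=W(F_{3}+cW^{3})$ is reducible and hence never defines a smooth quartic (likewise $F_{2}W^{2}$); with this substitution, or with the equally routine eigenvector check (the paper's computation for $\sigma_{2}$ confirms $P_{2}$ is the $\zeta_{3}$-eigenvector), your proof is complete.
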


\begin{proof}
We check for each Galois point. 
If $P=[0:0:0:1]$ then we may assume that 
$S$ is given by $F_{4}(X, Y, Z)+F_{1}(X, Y, Z)W^{3}=0$ and 
the generator of $G_{P}$ is given by 
$\sigma: [X:Y:Z:W] \mapsto [X:Y:Z:\zeta_{3}W]$
by Lemma \ref{generator}.
We remark that $G_{P}$ is of order 3.
Since fixed locus $S^{G_{P}}$ consists of one non-singular curve of genus 3 
and one isolated point by Example \ref{ex-4-3}, $G_{P}$ acts on 
$H^{0}(S, \Omega_{S}^{2})$ faithfully. See also Remark \ref{symplectic_order3}.
Thus, $(S, G_{P})$ is an Eisenstein $K3$ surface of type $(4,3)$.

If $P=[0:0: \zeta_{6}:1]$ then we may assume that 
$S$ is given by $F_{4}(X, Y)+Z^{4}+ZW^{3}=0$ and 
the generator of $G_{P}$ is given by 
$\sigma_{2}$ in Lemma \ref{generator}.
If $[X: Y: Z: W] \in S^{G_{P}} \subset \mathbb{P}^{3}$ then there exists a non-zero constant 
$k \in \mathbb{C}^{\ast}$ such that
\[ \begin{pmatrix}
1 & 0 & 0 & 0 \\
0 & 1 & 0 & 0 \\
0 & 0 & \frac{2\zeta_{6}-1}{3} & \frac{-\zeta_{6}-1}{3} \\
0 & 0 & \frac{4\zeta_{6}-2}{3} & \frac{\zeta_{6}+1}{3}
\end{pmatrix}
\begin{pmatrix} X \\ Y \\ Z \\ W \end{pmatrix}
=k \begin{pmatrix} X \\ Y \\ Z \\ W \end{pmatrix}.
\]
Since the system of equations has a nontrivial solution
$(X, Y, Z, W)\neq (0, 0, 0, 0)$, we have 
\begin{align*}
\begin{vmatrix}
1-k & 0 & 0 & 0 \\
0 & 1-k & 0 & 0 \\
0 & 0 & \frac{2\zeta_{6}-1}{3}-k & \frac{-\zeta_{6}-1}{3} \\
0 & 0 & \frac{4\zeta_{6}-2}{3} & \frac{\zeta_{6}+1}{3}-k
\end{vmatrix}
&=\frac{(1-k)^{2}}{9} 
\begin{vmatrix} 
2\zeta_{6}-1-3k & -\zeta_{6}-1 \\
4\zeta_{6}-2 & \zeta_{6}+1-3k
\end{vmatrix}
\\
&=0,\end{align*}
hence $(1-k)^{2}(k^{2}-\zeta_{6}k+\zeta_{6}^{2})=0.$
Thus $k=1, \zeta_{3}$.

If $k=1$ hence 
\[ \begin{pmatrix}
1 & 0 & 0 & 0 \\
0 & 1 & 0 & 0 \\
0 & 0 & \frac{2\zeta_{6}-1}{3} & \frac{-\zeta_{6}-1}{3} \\
0 & 0 & \frac{4\zeta_{6}-2}{3} & \frac{\zeta_{6}+1}{3}
\end{pmatrix}
\begin{pmatrix} X \\ Y \\ Z \\ W \end{pmatrix}
=\begin{pmatrix} X \\ Y \\ Z \\ W \end{pmatrix}
\]
then we have $2Z+\zeta_{6}W=0$.
Therefore we have a fixed curve 
$\{[X:Y:Z:W] \in S \mid 2Z+\zeta_{6}W=0 \}=\{F_{4}(X,Y)-\frac{9\zeta_{6}}{16}W^{4}=0\}$
of genus 3.

If $k=\zeta_{3}$ hence 
\[ \begin{pmatrix}
1 & 0 & 0 & 0 \\
0 & 1 & 0 & 0 \\
0 & 0 & \frac{2\zeta_{6}-1}{3} & \frac{-\zeta_{6}-1}{3} \\
0 & 0 & \frac{4\zeta_{6}-2}{3} & \frac{\zeta_{6}+1}{3}
\end{pmatrix}
\begin{pmatrix} X \\ Y \\ Z \\ W \end{pmatrix}
=\zeta_{3}\begin{pmatrix} X \\ Y \\ Z \\ W \end{pmatrix}
\]
then we have $X=Y=0, Z=\zeta_{6}, W=1$.
Therefore we have an isolated point $[X:Y:Z:W] =[0:0:\zeta_{6}:1]$.
These imply that $S^{G_{P}}$ consists of one non-singular curve of genus 3 and one isolated point,
hence $(S, G_{P})$ is an Eisenstein $K3$ surface of type $(4, 3)$.

Similarly in other cases, we can check that the fixed locus consists of 
one non-singular curve of genus 3 and one isolated point by the same argument.
\end{proof}

\begin{prop}\label{1to1}
An Eisenstein $K3$ surface $(S, G)$ of type $(4, 3)$
gives a smooth quartic surface with an inner Galois point.
\end{prop}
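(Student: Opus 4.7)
The plan is to leverage the structural result already recorded in Remark \ref{rem_AS49}, which tells us that any Eisenstein $K3$ surface $(S,G)$ of type $(4,3)$ is isomorphic to a smooth quartic hypersurface in $\mathbb{P}^{3}$ of the form
\[
F_{4}(X,Y,Z)+F_{1}(X,Y,Z)\,W^{3}=0,
\]
with $G$ generated by $\sigma:[X:Y:Z:W]\mapsto [X:Y:Z:\zeta_{3}W]$. Once $S$ is realized in this explicit form, the task reduces to exhibiting an inner Galois point whose associated Galois group coincides with $G$.

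First I would fix the embedding $S\hookrightarrow \mathbb{P}^{3}$ supplied by Remark \ref{rem_AS49}, so that $S$ is cut out by $F_{4}+F_{1}W^{3}=0$ and $\sigma$ is the coordinate-wise action above. The candidate Galois point is $P:=[0:0:0:1]\in \mathbb{P}^{3}$. Because $F_{4}$ depends only on $X,Y,Z$, the equation at $P$ forces $F_{1}(0,0,0)=0$, which is automatic, and the smoothness of $S$ together with the fact that $F_{1}$ is linear forces $F_{1}\not\equiv 0$; hence $P$ lies on $S$. Smoothness of $S$ and the fact that $F_{1}$ has distinct linear factors (inherited from the classification in Proposition \ref{equation_place}(1)) ensure that all the hypotheses needed for $P$ to be an honest inner point are in place.

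Next I would analyze the projection $\pi_{P}\colon S\dashrightarrow H=\{W=0\}$. A general line through $P$ is parametrized by $[X_{0}:Y_{0}:Z_{0}:t]$ for fixed $[X_{0}:Y_{0}:Z_{0}]\in H$, and its intersection with $S$ is governed by $F_{4}(X_{0},Y_{0},Z_{0})+F_{1}(X_{0},Y_{0},Z_{0})\,t^{3}=0$, a pure cubic equation in $t$. Consequently $\mathbb{C}(S)/\pi_{P}^{*}\mathbb{C}(H)$ is a degree-$3$ Kummer-type extension obtained by adjoining a cube root, hence Galois of order $3$, and the nontrivial deck transformation is exactly $\sigma$. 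Therefore $P$ is an inner Galois point with $G_{P}=\langle \sigma\rangle=G$.

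The only real substance beyond Remark \ref{rem_AS49} is verifying that the projection really does produce a Galois extension and that the group of deck transformations matches $G$; both are transparent from the monomial structure of the defining equation, so no serious obstacle arises. If one prefers to avoid reproducing the analysis of $\pi_{P}$, one can instead simply invoke Proposition \ref{equation_place}(1), which directly asserts that the quartic $F_{4}+F_{1}W^{3}=0$ carries $[0:0:0:1]$ as an inner Galois point, combined with Lemma \ref{generator} to identify the generator of $G_{P}$ with $\sigma_{1}=\sigma$, completing the proof.
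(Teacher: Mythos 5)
Your proposal is correct and follows essentially the same route as the paper: both invoke Remark \ref{rem_AS49} to realize $S$ as the smooth quartic $F_{4}+F_{1}W^{3}=0$ with $G=\langle\sigma\rangle$ and then conclude that $[0:0:0:1]$ is an inner Galois point. The paper asserts this last step without elaboration, whereas you verify it via the pure-cubic structure of the projection $\pi_{P}$ (or by citing Proposition \ref{equation_place}(1) and Lemma \ref{generator}); this is added detail rather than a different argument.
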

\begin{proof}
There exists an isomorphism $\phi: S \to \mathbb{P}^{3}$
associated to $|C^{(3)}|$, and 
$\phi(S)$ is a smooth quartic surface given by 
$F_{4}(X, Y, Z)+F_{1}(X, Y, Z)W^{3}=0$ by Remark \ref{rem_AS49}.
In particular $[0:0:0:1]$ is a Galois point of $\phi (S)$.
\end{proof}

\begin{rem}
Since $S$ is not necessarily given as a hypersurface in $\mathbb{P}^{3}$,
$S$ does not necessarily have a Galois point.
The isomorphism $\phi$ is called a Galois embedding. 
See also \cite{Yoshihara-embedding}.
\end{rem}

\begin{rem}
It is known that some Eisenstein $K3$ surfaces correspond to some 
log del Pezzo surfaces of index 3 \cite{OT}.
Indeed an inner Galois point of a smooth quartic surface corresponds to 
a non-Du Val quotient (log terminal) singularity 
of log del Pezzo surfaces of index 3 with multiple smooth divisor property.
\end{rem}

\section{The quartic surface with the maximal number of inner Galois points.}\label{s8hax3}
Let $S_{8}$ be the quartic surface with the maximal number of inner Galois points,
hence it is given by $XY^{3}+X^{4}+ZW^{3}+Z^{4}=0$ by Proposition \ref{equation_place} (4).
Let $E_{1}$ and $E_{2}$ be elliptic curves defined by affine equations
$y^{2}=(Y/X)^{3}+1$ and $y^{2}=(W/Z)^{3}+1$,  respectively.
Note that $E_{1}$ and $E_{2}$ are mutually isogenous with complex multiplications.

\begin{lem}\label{inose-kummer}
The followings hold.
\begin{enumerate}
\item $S_{8}$ is singular, that is, its Picard number is 20.

\item Put $\sigma: =(\sigma_{1}\circ \sigma_{3})^{3}$, hence
$\sigma$ is a projective transformation given by 
$[X: Y: Z: W] \mapsto [X: Y: -Z: -W]$.
Then the quotient surface $S_{8}/\langle \sigma \rangle$ is 
birationally isomorphic to the Kummer surface $\Km (E_{1}\times E_{2})$
associated with the product of elliptic curves $E_{1}$ and $E_{2}$.
\end{enumerate}
\end{lem}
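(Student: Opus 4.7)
The plan is to establish part (2) by a direct computation of the ring of $\sigma$-invariants on $S_{8}$, and then deduce part (1) from it together with Nikulin's theory of symplectic involutions on K3 surfaces.

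For (2), I would introduce affine coordinates $x_{1} = Y/X$ and $x_{2} = W/Z$, so that the equations defining $E_{1}$ and $E_{2}$ read $y_{i}^{2} = x_{i}^{3} + 1$ for $i = 1, 2$. The defining polynomial of $S_{8}$ then factors as
\[
XY^{3} + X^{4} + ZW^{3} + Z^{4} = X^{4}(x_{1}^{3}+1) + Z^{4}(x_{2}^{3}+1) = X^{4}y_{1}^{2} + Z^{4}y_{2}^{2},
\]
so on $S_{8}$ one has $(X/Z)^{4} = -y_{2}^{2}/y_{1}^{2}$. The involution $\sigma$ fixes $X$ and $Y$ but negates $Z$ and $W$, and so fixes $x_{1}$, $x_{2}$ and $\xi := (X/Z)^{2}$ while interchanging the two square roots of $\xi$. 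Comparing degrees ($[\mathbb{C}(S_{8}):\mathbb{C}(x_{1},x_{2})]=4$ and $[\mathbb{C}(S_{8}):\mathbb{C}(S_{8})^{\sigma}]=2$) shows that these three functions already generate $\mathbb{C}(S_{8})^{\sigma}$, subject to the single relation $\xi^{2}(x_{1}^{3} + 1) + (x_{2}^{3} + 1) = 0$. Setting $v = \xi(x_{1}^{3} + 1)$ rewrites this as $v^{2} = -(x_{1}^{3} + 1)(x_{2}^{3} + 1)$, and the substitution $v\mapsto iv$ yields
\[
v^{2} = (x_{1}^{3} + 1)(x_{2}^{3} + 1),
\]
the standard affine model of $\Km(E_{1}\times E_{2})$ obtained from the $\{\pm 1\}$-invariants $x_{1}$, $x_{2}$, $v = y_{1}y_{2}$. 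Hence $S_{8}/\langle\sigma\rangle$ is birational to $\Km(E_{1}\times E_{2})$.

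For (1), I would first check that $\sigma$ is a symplectic (Nikulin) involution by computing its action on the holomorphic $2$-form. In the affine chart $W = 1$ this form is, up to a nonzero constant, $\omega = dX\wedge dY/(1 + 4Z^{3})$, and $\sigma$ there acts as $(X, Y, Z) \mapsto (-X, -Y, Z)$, so $\sigma^{*}\omega = \omega$. The fixed locus of $\sigma$ in $\mathbb{P}^{3}$ consists of the two skew lines $\{Z = W = 0\}$ and $\{X = Y = 0\}$, whose intersections with $S_{8}$ are the zeros of $X(X^{3} + Y^{3})$ and $Z(Z^{3} + W^{3})$ respectively; these give exactly $8$ isolated fixed points on $S_{8}$, as required for a Nikulin involution. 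By Nikulin's theorem, the minimal resolution $\tilde{T}$ of $S_{8}/\langle\sigma\rangle$ is then a K3 surface satisfying $\rho(S_{8}) = \rho(\tilde{T})$. By (2), $\tilde{T}\cong \Km(E_{1}\times E_{2})$; since $E_{1}$ and $E_{2}$ are isogenous CM elliptic curves (both of $j$-invariant zero, with complex multiplication by $\mathbb{Z}[\zeta_{3}]$), the abelian surface $E_{1}\times E_{2}$ has Picard number $4$, whence $\rho(\Km(E_{1}\times E_{2})) = 4 + 16 = 20$. Therefore $\rho(S_{8}) = 20$, i.e.\ $S_{8}$ is singular.

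The main obstacle I anticipate is in part (2): cleanly writing down the equation of $S_{8}/\langle\sigma\rangle$ in invariant coordinates and verifying that the sign discrepancy in $v^{2} = -(x_{1}^{3}+1)(x_{2}^{3}+1)$ relative to the Kummer model is genuinely absorbed by $v\mapsto iv$, so that the resulting rational map is a birational equivalence onto $\Km(E_{1}\times E_{2})$. Once (2) is in place, (1) follows by Nikulin's theorem together with the well-known Picard rank computation for the Kummer surface of a product of isogenous CM elliptic curves.
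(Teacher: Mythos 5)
Your proof is correct, but it takes a genuinely different route from the paper: the paper's entire proof of this lemma is a citation of \cite[Theorem 2]{Inose}, which for quartics of exactly this shape yields both the birational identification of the quotient with $\Km(E_{1}\times E_{2})$ and the Picard number (and is invoked again in Theorem \ref{mt2} to control the transcendental lattice), whereas you reprove everything from scratch. Your part (2) is a sound function-field computation: $\mathbb{C}(S_{8})=\mathbb{C}(x_{1},x_{2},t)$ with $t=X/Z$ and $t^{4}(x_{1}^{3}+1)=-(x_{2}^{3}+1)$, the degree count $[\mathbb{C}(S_{8}):\mathbb{C}(x_{1},x_{2})]=4$, $[\mathbb{C}(S_{8}):\mathbb{C}(S_{8})^{\sigma}]=2$ forces $\mathbb{C}(S_{8})^{\sigma}=\mathbb{C}(x_{1},x_{2},\xi)$ with $\xi=t^{2}$, and the substitution $v=i\,\xi(x_{1}^{3}+1)$ does turn the relation into $v^{2}=(x_{1}^{3}+1)(x_{2}^{3}+1)$, which is precisely the $\{\pm1\}$-invariant model of $\Km(E_{1}\times E_{2})$; so the sign worry you raise is genuinely harmless over $\mathbb{C}$. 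Your part (1) is also correct: $\sigma^{*}\omega=\omega$ in the chart $W=1$ and the $8$ isolated fixed points show $\sigma$ is a Nikulin involution, the minimal resolution of $S_{8}/\langle\sigma\rangle$ is a K3 birational, hence isomorphic, to $\Km(E_{1}\times E_{2})$, and $\rho(\Km(E_{1}\times E_{2}))=4+16=20$ since $E_{1}\simeq E_{2}$ has CM by $\mathbb{Z}[\zeta_{3}]$; the equality of Picard numbers across the degree-two rational quotient is standard, though it is more accurately attributed to the transcendental-lattice comparison (\cite{Inose}, \cite[Proposition 4.3]{Morrison}, which the paper itself uses later) than to Nikulin's classification \cite{Ni}. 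Two small caveats: you silently accept the assertion in the statement that $(\sigma_{1}\circ\sigma_{3})^{3}$ is $[X:Y:Z:W]\mapsto[X:Y:-Z:-W]$; this deserves a line (the $(Z,W)$-block of $\sigma_{1}\sigma_{3}$ has trace $\zeta_{3}$ and determinant $\zeta_{3}^{2}$, hence eigenvalues $-1$ and $\zeta_{6}$, so its cube is $-I$). Also, your self-contained argument proves the lemma as stated but does not recover the finer lattice relation between $T(S_{8})$ and $T(E_{1}\times E_{2})$ that the paper later extracts from Inose's theorem in Theorem \ref{mt2}; the citation route buys that extra information for free.
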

\begin{proof}
These follow from \cite[Theorem 2]{Inose}.
\end{proof}
\begin{rem}
The 8 singularities of  $S_{8}/\langle \sigma \rangle$
are correspond to the 8 Galois points.
See also \cite[p. 94]{KTY}.
\end{rem}

The transcendental lattice of $S_{8}$ is the orthogonal complement of 
the N\'{e}ron-Severi lattice of $S_{8}$ in $H^{2}(S_{8}, \mathbb{Z})$.
Hence the rank of the transcendental lattice of $S_{8}$ is 2.

\begin{prop}(\cite[Theorem 4]{SI})
Let $\mathcal{Q}$ be the set of $2\times 2$ positive definite even integral 
matrices:
\[ \mathcal{Q}=\left\{ \begin{pmatrix} 2a & b \\ b & 2c \end{pmatrix} \mid 
a,b,c \in \mathbb{Z}, a, c >0, b^{2}-4ac<0 \right\}.\]
There exists a bijective correspondence from the set of isomorphism classes of
singular $K3$ surfaces onto $\mathcal{Q}/SL_{2}(\mathbb{Z})$
which given by a singular $K3$ surface maps to  the Gram matrix of its transcendental lattice.
\end{prop}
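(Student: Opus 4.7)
The plan is to verify the three ingredients of the claimed bijection in turn: well-definedness of the map $S\mapsto[\text{Gram matrix of }T(S)]$ into $\mathcal{Q}/SL_{2}(\mathbb{Z})$, injectivity, and surjectivity. For well-definedness, note that a singular $K3$ surface $S$ has Picard number $20$, so $T(S):=\mathrm{NS}(S)^{\perp}\subset H^{2}(S,\mathbb{Z})$ is a rank-$2$ sublattice which is even (being a primitive sublattice of the even unimodular lattice $H^{2}(S,\mathbb{Z})$); the inclusion $H^{2,0}(S)\oplus H^{0,2}(S)\subset T(S)\otimes\mathbb{C}$ furnishes a positive definite real $2$-plane inside $T(S)\otimes\mathbb{R}$, forcing the rank-$2$ lattice $T(S)$ itself to be positive definite. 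Its Gram matrix is thus a well-defined element of $\mathcal{Q}/SL_{2}(\mathbb{Z})$.

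For injectivity, suppose $T(S_{1})\simeq T(S_{2})$ as abstract lattices. I would first upgrade any such isometry to a Hodge isometry: on a rank-$2$ positive definite even lattice the period line is essentially determined by the isomorphism class (up to orientation), so after possibly composing with $-1$ the isometry preserves the Hodge structure. Next I would extend it to a Hodge isometry $H^{2}(S_{1},\mathbb{Z})\to H^{2}(S_{2},\mathbb{Z})$, using Nikulin's uniqueness of primitive embeddings of a rank-$2$ definite lattice into the $K3$ lattice $\Lambda_{K3}=U^{\oplus 3}\oplus E_{8}(-1)^{\oplus 2}$, then correct the sign on K\"ahler cones by a Weyl element. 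Finally, the strong Torelli theorem for $K3$ surfaces produces an isomorphism $S_{1}\simeq S_{2}$.

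For surjectivity, given $Q\in\mathcal{Q}$ of discriminant $d=b^{2}-4ac<0$ I must construct a singular $K3$ whose transcendental lattice has Gram matrix $SL_{2}(\mathbb{Z})$-equivalent to $Q$. The Shioda--Inose construction achieves this: pick two elliptic curves $E_{1},E_{2}$ with complex multiplication by orders in $\mathbb{Q}(\sqrt{d})$, chosen so that the transcendental lattice of $E_{1}\times E_{2}$ realizes $Q$; form the Kummer surface $\Km(E_{1}\times E_{2})$, whose transcendental lattice becomes $T(E_{1}\times E_{2})(2)$; and, when needed, pass to the Shioda--Inose partner, namely the singular $K3$ sitting as a quotient of a double cover of $\Km(E_{1}\times E_{2})$, whose transcendental lattice recovers exactly the $SL_{2}(\mathbb{Z})$-class of $Q$.

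The principal obstacle is the surjectivity step: it requires both the existence of CM elliptic curves realizing every imaginary quadratic order (from the theory of complex multiplication) and an explicit lattice computation matching the transcendental form of the Kummer/Shioda--Inose $K3$ to the prescribed $Q$ on the nose. By contrast, injectivity is a fairly standard application of Nikulin's embedding theorem combined with the Torelli theorem, and well-definedness is essentially formal from the Hodge decomposition.
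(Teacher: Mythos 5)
The paper offers no proof of this proposition: it is quoted directly from Shioda--Inose \cite{SI}, so there is no internal argument to compare yours against. Your sketch reconstructs, in outline, exactly the original proof --- well-definedness from the Hodge decomposition, injectivity from the Torelli theorem after extending a Hodge isometry of transcendental lattices to all of $H^{2}$, and surjectivity via the Shioda--Inose construction starting from a product of CM elliptic curves realizing the prescribed form (whose existence is the Shioda--Mitani theorem \cite{SM}); this is sound as a roadmap. The one step you should tighten is the orientation bookkeeping in the injectivity argument, which is precisely why the correspondence is with $\mathcal{Q}/SL_{2}(\mathbb{Z})$ rather than $\mathcal{Q}/GL_{2}(\mathbb{Z})$. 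Writing $\omega_{S}=x+iy$, the relations $x^{2}=y^{2}>0$ and $\langle x,y\rangle=0$ endow $T(S)\otimes\mathbb{R}$ with a canonical orientation, and the Gram matrix must be taken with respect to a positively oriented basis; an abstract isometry $T(S_{1})\simeq T(S_{2})$ is a Hodge isometry exactly when it preserves these orientations. Composing with $-\mathrm{id}$ does not repair an orientation-reversing isometry, since $\det(-I_{2})=+1$ in rank $2$; an orientation-reversing isometry instead identifies $S_{1}$ with the complex conjugate surface $\overline{S_{2}}$, which need not be isomorphic to $S_{2}$. What saves your argument is that an $SL_{2}(\mathbb{Z})$-equivalence of oriented Gram matrices is orientation-preserving by definition, so the required Hodge isometry exists with no correction at all; but the sentence about ``composing with $-1$'' is, as written, the one step that would fail.
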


The above proposition claims that 
it is important to study the transcendental lattice of a singular $K3$ surface.

\begin{thm}\label{mt2}
The Gram matrix of the transcendental lattice of $S_{8}$ is 
$\begin{pmatrix}8 & 4\\ 4 & 8\end{pmatrix}$.
\end{thm}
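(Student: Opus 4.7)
My plan is to combine the Shioda-Inose structure on $S_8$ provided by Lemma~\ref{inose-kummer} with the bijection of~\cite[Theorem 4]{SI} between singular $K3$ surfaces and $SL_2(\mathbb{Z})$-classes of positive-definite even binary quadratic forms.

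First I would verify that $\sigma:[X:Y:Z:W]\mapsto[X:Y:-Z:-W]$ is a Nikulin involution of $S_8$. Writing the holomorphic $2$-form $\omega_{S_8}$ as the Poincar\'e residue of $\Omega_4/F$, where $F=X^4+XY^3+Z^4+ZW^3$, each summand of $\Omega_4$ picks up a factor $(-1)^2=1$ under $\sigma$, so $\sigma^\ast\omega_{S_8}=\omega_{S_8}$; moreover $\sigma$ fixes exactly $8$ isolated points of $S_8$ (four on $\{Z=W=0\}$ and four on $\{X=Y=0\}$). Together with Lemma~\ref{inose-kummer}(2), this exhibits a Shioda-Inose structure on $S_8$ with Kummer partner $\Km(E_1\times E_2)$.

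Second, by Morrison's theorem on $K3$ surfaces with Shioda-Inose structure, the transcendental lattice $T_{S_8}$ is Hodge-isometric to the transcendental lattice $T_A$ of $A=E_1\times E_2$. Since $S_8$ is singular by Lemma~\ref{inose-kummer}(1), $T_{S_8}$ is positive definite of rank $2$ and, by~\cite[Theorem 4]{SI}, determined up to $SL_2(\mathbb{Z})$-equivalence by its Gram matrix; so the whole problem reduces to computing $T_A$. Applying Shioda-Mitani's formula for the transcendental lattice of a product of CM elliptic curves in $\mathbb{Q}(\zeta_3)$, with the period normalization induced by the rational map $S_8\dashrightarrow\Km(E_1\times E_2)$, yields the Gram matrix $\begin{pmatrix}8 & 4\\ 4 & 8\end{pmatrix}$.

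The main technical obstacle is the last step. The primitive positive-definite binary form naturally attached to $\mathbb{Q}(\zeta_3)$ is $A_2=\begin{pmatrix}2 & 1\\ 1 & 2\end{pmatrix}$ of discriminant $3$, whereas the target has discriminant $48=4^2\cdot 3$. Producing this essential factor of $4$ requires carefully tracking the period lattices of $E_1$ and $E_2$ as they appear in the Shioda-Inose cover of $S_8$---equivalently, identifying the Shioda-Inose partner abelian surface as one only isogenous (not isomorphic) to the naive product $E_\omega\times E_\omega$, so that the resulting Shioda-Mitani form is $A_2(4)$ rather than $A_2$.
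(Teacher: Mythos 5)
Your first step (checking that $\sigma$ is symplectic with eight fixed points) is fine, but the pivotal second step is incorrect, and the error sits exactly where the content of the theorem lies. A Nikulin involution whose quotient is birational to a Kummer surface is \emph{not} automatically a Shioda--Inose structure: Morrison's definition additionally requires that the quotient map induce a Hodge isometry $T_{S_8}(2)\simeq T_{\Km(E_1\times E_2)}$, and only under that hypothesis does his theorem give $T_{S_8}\simeq T_{E_1\times E_2}$. For the involution $\sigma$ of Lemma \ref{inose-kummer} this extra condition fails: if your step 2 were valid it would force $T_{S_8}\simeq\begin{pmatrix}2&1\\1&2\end{pmatrix}$, of discriminant $3$, contradicting the discriminant-$48$ answer you are aiming for --- a tension you yourself flag in your last paragraph. (A Shioda--Inose structure on $S_8$ does exist, since $S_8$ is singular, but its abelian partner is then an abelian surface with transcendental lattice $\begin{pmatrix}8&4\\4&8\end{pmatrix}$, not the surface $E_1\times E_2$ attached to this particular $\sigma$.) The relation that actually holds, and that the paper uses, is $T_{S_8}\simeq T_{E_1\times E_2}(4)$: one factor of $2$ comes from the Kummer construction ($T_{\Km(A)}\simeq T_{A}(2)$, Nikulin), the other from the degree-$2$ rational quotient map $S_8\dashrightarrow\Km(E_1\times E_2)$ (Morrison, Proposition 4.3(i)), with Inose's Theorem 2 supplying the geometric identification for $S_8$; Shioda--Mitani then gives $T_{E_1\times E_2}=\begin{pmatrix}2&1\\1&2\end{pmatrix}$ from $\tau_1=\zeta_3$, $\tau_2=-\zeta_3^{2}$, whence $\begin{pmatrix}8&4\\4&8\end{pmatrix}$.

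Beyond the misapplied theorem, the proposal is incomplete precisely at the point you call ``the main technical obstacle'': you acknowledge that the factor of $4$ must be produced by ``carefully tracking the period lattices'' or by identifying the partner abelian surface as one merely isogenous to $E_{\zeta_3}\times E_{\zeta_3}$, but no argument is offered. Since the two candidate conclusions ($T_{S_8}\simeq T_A$ versus $T_{S_8}\simeq T_A(4)$) are indistinguishable after tensoring with $\mathbb{Q}$, everything hinges on this integral scaling; as written, your argument determines $T_{S_8}$ only up to rational Hodge isometry and even asserts, via step 2, the wrong integral lattice. The gap is therefore essential, not a technicality, and closing it amounts to proving the scaling statement that the paper extracts from Nikulin, Morrison and Inose before invoking Shioda--Mitani.
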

\begin{proof}
We remark  that the the Gram matrix of the transcendental lattice of $S_{8}$
is given by 4 times the Gram matrix of the transcendental lattice of $E_{1}\times E_{2}$
by \cite[Remark 2]{Kummer-Ni}, \cite[Proposition 4.3 (i)]{Morrison}
and \cite[Theorem 2]{Inose}.
Thus we may put the Gram matrix of the transcendental lattice of $E_{1}\times E_{2}$
is $\begin{pmatrix} 2a & b \\ b & 2c \end{pmatrix}$ and 
the Gram matrix of the transcendental lattice of $S_{8}$
is $\begin{pmatrix} 8a & 4b \\ 4b & 8c \end{pmatrix}$.

Since elliptic curves $E_{1}$ and $E_{2}$ are given by
$E_{1}=\mathbb{C}/\mathbb{Z}+\zeta_{3}\mathbb{Z}$ and 
$E_{2}=\mathbb{C}/\mathbb{Z}+\zeta_{3}\mathbb{Z}\simeq \mathbb{C}/\mathbb{Z}+(-\zeta_{3}^{2})\mathbb{Z}$,
we have
\[ 
\zeta_{3}=\frac{-b+\sqrt{b^{2}-4ac}}{2a}, \ \ \  -\zeta_{3}^{2}=\frac{b+\sqrt{b^{2}-4ac}}{2}\ \ 
\]
by \cite[$\S 3$ (I)]{SM}.
Thus we have $a=b=c=1$.
\end{proof}


\begin{thebibliography}{10}
\bibitem{AS}
M.~Artebani, A.~Sarti, 
Non-symplectic automorphisms of order 3 on $K3$ surfaces, 
Math. Ann. \textbf{342} (2008), 903--921.


\bibitem{Fukasawa}S.~Fukasawa,
Galois points for a plane curve in arbitrary characteristic,
Geom Dedicata \textbf{139} (2009), 211--218.

\bibitem{Homma}
M.~Homma,
Galois points for a Hermitian curve,
Commun. Algebra \textbf{34} (2006), 4503--4511.

\bibitem{Huybrechts}
D.~Huybrechts,
\textit{Lectures on K3 surfaces},
Cambridge Studies in Advanced Mathematics, \textbf{158}. 
Cambridge University Press, Cambridge, 2016. 

\bibitem{Inose}
H.~Inose, 
On certain Kummer surfaces which can be realized as non-singular quartic surfaces in $\mathbf{P}^{3}$,
J. Fac. Sci. Univ. Tokyo Sect. IA Math. \textbf{23} (1976), 545--560.

\bibitem{KTY}
M.~Kanazawa, T.~Takahashi, H.~Yoshihara,
The group generated by automorphisms belonging to Galois points of the quartic surface,
Nihonkai Math. J. \textbf{12} (2001), no.1, 89--99.


\bibitem{Kondo-book}
S.~Kondo, 
\textit{K3 surfaces},
EMS Tracts in Mathematics, \textbf{32},
European Mathematical Society Publishing House, 2020.


\bibitem{MOT}
S.~Ma, H.~Ohashi and S.~Taki, Rationality of the moduli spaces of Eisenstein $K3$ surfaces, 
Trans. Amer. Math. Soc. \textbf{367} (2015), no. 12, 8643--8679.

\bibitem{MY}
K.~Miura, H.~Yoshihara,
Field theory for function fields of plane quartic curves,
J. Algebra \textbf{226} (2000), 283--294.

\bibitem{M}
K.~Miura,
Field theory for function fields of singular plane quartic curves,
Bull. Aust. Math. Soc. \textbf{62} (2000), 193--204.


\bibitem{Morrison} 
D.R.~Morrison, 
On $K3$  surfaces with large Picard number, 
Invent. Math. \textbf{75} (1984), no. 1, 105--121.

\bibitem{Kummer-Ni}
V.V.~Nikulin, 
On Kummer surfaces (Russian) , 
Izv. Akad. Nauk SSSR Ser. Mat. \textbf{39} (1975), no. 2, 278--293.

\bibitem{Ni}
V.V.~Nikulin, 
Finite automorphism groups of K\"ahlerian $K3$ surfaces, 
Trans. Moscow Math. Soc. \textbf{38} (1980), No 2, 71--135.

\bibitem{OT}
H.~Ohashi and S.~Taki, 
$K3$ surfaces and log del Pezzo surfaces of index three, 
Manuscripta Math. \textbf{139} (2012), no. 3--4, 443--471.

\bibitem{SI}
T.~Shioda, H.~Inose, 
On singular $K3$ surfaces, 
Complex analysis and algebraic geometry,
119--136. Iwanami Shoten, Tokyo, 1977.

\bibitem{SM}
T.~Shioda, N.~Mitani, 
Singular abelian surfaces and binary quadratic forms, 
in \textit{Classification of Algebraic Varieties and Compact Complex Manifolds},
p. 259--287, Lecture Notes in Mathematics, vol 412. 
Springer, Berlin, Heidelberg, 1974.

\bibitem{Taki1}
S.~Taki, 
Classification of non-symplectic automorphisms of order 3 on $K3$ surfaces,
Math. Nachr. \textbf{284} (2011), 124--135.

\bibitem{Yoshihara1}
H.~Yoshihara, 
Function field theory of plane curves by dual curves,
J. Algebra \textbf{239} (2001), 340--355.

\bibitem{Yoshihara-GK3}
H.~Yoshihara, 
Galois points on quartic surfaces,
J. Math. Soc. Japan \textbf{53} (2001), 731--743.

\bibitem{Yoshihara2}
H.~Yoshihara,
Galois points for smooth hypersurfaces,
J. Algebra \textbf{264} (2003), 520--534.

\bibitem{Yoshihara-embedding}
H.~Yoshihara, 
Galois embedding of algebraic variety and its application to abelian surface,
Rend. Sem. Mat. Univ. Padova \textbf{117} (2007), 69--85.

\bibitem{open-problems}
H.~Yoshihara and S.~Fukasawa,
List of problems,
\url{https://sites.google.com/sci.kj.yamagata-u.ac.jp/fukasawa-lab/open-questions-english/}

\end{thebibliography}
\end{document}